\theoremstyle{plain}
  \newtheorem{theorem}{Theorem}[]
  \newtheorem{proposition}[theorem]{Proposition}
  \newtheorem{conjecture}[theorem]{Conjecture}
\theoremstyle{definition}
 \theoremstyle{remark}
  \newtheorem{remark}[theorem]{Remark}
\numberwithin{equation}{section}
\def\RR{{\mathbb R}}
\def\QQ{\mathbb{Q}}
\def\Sym{{\mathrm{Sym}}}
\def\symm{\mathfrak{S}}
\def\Pconf{\operatorname{Conf}}
\def\Lie{\mathrm{Lie}}
\def\sgn{\mathrm{sgn}}
\def\im{\mathrm{im}}
\def\sphere{\mathbb{S}}
\newcommand\mapsfrom{\mathrel{\reflectbox{\ensuremath{\longmapsto}}}}
\begin{document}

\author{Nicholas Early}
\email{earlnick@gmail.com}
\address{Massachusetts Institute of Technology\\Cambridge MA\\02139}
\author{Victor Reiner}
\email{reiner@umn.edu}
\address{School of Mathematics\\University of Minnesota\\Minneapolis MN 55455}

\title[On Whitehouse's lifts]{On configuration spaces and Whitehouse's lifts of the Eulerian representations}

\begin{abstract}
	S. Whitehouse's lifts of the Eulerian representations of $\symm_n$ to $\symm_{n+1}$ are reinterpreted, topologically and ring-theoretically, building on the first author's work on A. Ocneanu's theory of permutohedral blades.
\end{abstract}

		
\thanks{First, second authors supported by NSF grants DMS-1148634, DMS-1601961.  The paper was written while the first author was an RTG postdoc at the University of Minnesota}
\subjclass{05E10, 55R80}

\maketitle

\begingroup
\let\cleardoublepage\relax
\let\clearpage\relax

\endgroup
\section{Introduction}
\label{intro-section}

In their work on Hochschild cohomology \cite{GerstenhaberSchack},
and building on work of Barr \cite[Prop. 2.5]{Barr}, 
Gerstenhaber and Schack introduced certain orthogonal idempotents  
$\{ e^{(j)}_n \}_{i=1,2,\ldots,n }$ inside 
the group algebra $\QQ \symm_n$ of the symmetric group $\symm_n$, 
now called the {\it Eulerian idempotents}.
To define them, write for $i=1,2,\ldots,n-1$ the element
$
s_{i,n-i}:=\sum_{w} \sgn(w) w
$ 
in which the sum ranges over all $w=(w_1,w_2,\ldots,w_n)$ in $\symm_n$
satisfying
$
w_1 < w_2 < \cdots < w_i \text{ and }
w_{i+1} < w_{i+2} < \cdots < w_n.
$
Their sum $s_n:=\sum_{i=1}^n s_{i,n-i}$ turns out to act 
semisimply on $\QQ \symm_n$ via left multiplication, 
with eigenvalues $\{2^j-2\}_{j=1}^n$.  The $j^{th}$ {\it Eulerian idempotent} $e^{(j)}_n$ is the polynomial in 
$s_n$ projecting to the $(2^j-2)$-eigenspace  $E_n^{(j)}:= e^{(j)}_n \QQ \symm_n$.
See Whitehouse \cite[\S1]{WhitehouseInduced} for a nice discussion
of the relation to Hochschild cohomology,
and Reutenauer's book \cite{Reutenauer}, particularly its Section 9.5, 
for history and combinatorial context.

The dimension of $E_n^{(j)}$ counts the 
permutations $w$ in $\symm_n$ with $j$ cycles, that is, it is the (signless)
{\it Stirling number of the first kind}.
The (right-) $\symm_n$-representation on $E_n^{(j)}$ is well-studied, by
Hanlon \cite{Hanlon} and others: 
\begin{itemize}
\item
For $j=n$, the representation
$E_n^{(n)}$ carries the {\it sign} representation $\sgn$.
\item
For $j=1$, the $0$-eigenspace $E_n^{(1)}$, after tensoring with $\sgn$, becomes isomorphic to the well-known 
$\symm_n$-representation $\Lie_n$
on the multilinear part of the {\it free Lie algebra} on $n$ generators.
\item
In general, $\sgn \otimes E_n^{(j)}$ is the multilinear part
in one of the {\it higher Lie characters} in the Poincar\'e-Birkhoff-Witt decomposition for
the tensor algebra in terms of the free Lie algebra.
\end{itemize}

The $E_n^{(j)}$ also have an interpretation in terms
of cohomology of configuration spaces.  Throughout this paper, for
a topological space $X$, all cohomology $H^*X$ will 
mean singular cohomology $H^*(X,\QQ)$ with rational coefficients, and 
$\dim(-)$ will always mean $\dim_{\QQ}(-)$.  
Given a space $X$, its 
$n^{th}$ {\it (ordered) configuration space} $\Pconf^nX$ is the complement of
the thick diagonal within the $n$-fold cartesian product $X^n$:
$$
\Pconf^nX:=\{(p_1,\ldots,p_n) \in X^n: p_i \neq p_j \text{ for }1 \leq i \neq j \leq n\}.
$$ 
The symmetric group action on $X^n$ permuting coordinates preserves $\Pconf^nX$, and hence acts on the cohomology ring $H^*\Pconf^nX$.

It is well-known (see, e.g., Sundaram and Welker \cite{SundaramWelker}) that  for Euclidean spaces $X=\RR^d$ with $d \geq 1$, this cohomology vanishes except
in degrees $k(d-1)$ for $0 \leq k \leq n-1$.  Comparing \cite[Theorem 4.4(iii) at $k=2$]{SundaramWelker}
with known descriptions of $E^{(n)}_j$ (e.g., from Hanlon \cite{Hanlon}; see Sundaram \cite[eqn. (2.16)]{Sundaram-variations})
shows that for $d \geq 3$ odd, one has an $\symm_n$-module isomorphism 
\begin{equation}
\label{Eulerian-topological-interpretation}
E_n^{(j)} \,\, \cong \,\, \sgn \otimes H^{(n-j)(d-1)} \Pconf^n(\RR^d).
\end{equation}
Earlier, F. Cohen (see \cite{Cohen}) 
had given a presentation for the cohomology algebra of 
$H^*\Pconf^n(\RR^d)$, which takes the following form for odd $d \geq 3$.

\vskip.1in
\noindent
{\bf Definition.}
Define a commutative $\QQ$-algebra $\mathcal{U}^n$ via the presentation
\begin{equation}
\label{Cohen-presentation}
\mathcal{U}^n:= \QQ[u_{ij}]_{1 \leq i \neq j \leq n}/I
\end{equation}
where $\QQ[u_{ij}]_{1 \leq i \neq j \leq n}$ is a commutative polynomial ring, and $I$ is the ideal generated by
\begin{itemize}
\item $u_{ij}^2$ and $u_{ij} + u_{ji}$ for $ 1 \leq i < j \leq n$,
\item $u_{ij}u_{jk}+u_{jk}u_{ki}+u_{ki}u_{ij}$ for triples of distinct integers $(i,j,k)$ with $1 \leq i,j,k \leq n$.
\end{itemize} 
The grading on $\QQ[u_{ij}]$ with $\deg(u_{ij})=1$ makes the ideal
$I$ homogeneous, so that $\mathcal{U}^n$ becomes a graded ring.

\vskip.1in
Cohen showed that for odd $d \geq 3$ one has a ring isomorphism
\begin{equation}
\label{Cohen-isomorphism}
\mathcal{U}^n \cong H^*\Pconf^n\RR^d. 
\end{equation}
This isomorphism multiplies degrees by $d-1$, because it
sends the element $u_{ij}$ in degree one of $\mathcal{U}^n$ to the class in 
$H^{d-1} \Pconf^n\RR^d$
representing the pullback of a nonzero class in $H^{d-1} \Pconf^2(\RR^d)=\QQ$ along
the surjection $\Pconf^n(\RR^d) \rightarrow \Pconf^2(\RR^d)$ that maps $(p_1,\ldots,p_n)$ to $(p_i,p_j)$.
Thus $w$ in $\symm_n$ acts via $w(u_{ij})= u_{w(i),w(j)}$.   
The finitely-presented algebra $\mathcal{U}^n$ defined
in \eqref{Cohen-presentation} is the special case 
for the reflection hyperplane arrangement of type $A_{n-1}$ of what
has been called the {\it Artinian Orlik-Terao algebra}; 
see, e.g. \cite[\S2.2]{MoseleyProudfootYoung}.

We wish to give analogous interpretations to certain idempotents and $\symm_n$-representations defined by Whitehouse  in \cite{WhitehouseInduced}.  Letting $c=(1,2,\ldots,n)$ be an $n$-cycle, she showed that the idempotent 
$
\Lambda_n=\sum_{i=0}^{n-1} \sgn(c)^i c^i 
$ 
of $\QQ \symm_n$ commutes with the element $s_{n-1}$, after embedding $s_{n-1}$ via the inclusion of $\symm_{n-1}$ into $\symm_n$ as the permutations fixing $n$.  Hence $\Lambda_n$ commutes with all Eulerian 
idempotents $\{e_{n-1}^{(j)}\}_{j=1}^{n-1}$, so that the products 
$\{ \Lambda_n e^{(j)}_{n-1} \}_{j=1}^{n-1}$ are also idempotent.  
She then proved several properties of the $\symm_n$-representations 
$$
F^{(j)}_n:= \Lambda_n e^{(j)}_{n-1} \QQ \symm_n = e^{(j)}_{n-1} \Lambda_n \QQ \symm_n  \quad \text{ for }j=1,2,\ldots,n-1.
$$
For one, they {\it lift} the Eulerian representations of $\symm_{n-1}$
to $\symm_n$-representations in this sense \cite[Prop. 1.4]{WhitehouseInduced}: 
\begin{equation}
\label{Whitehouse-lifting-property}
F^{(j)}_n \downarrow^{\symm_n}_{\symm_{n-1}} \cong E_{n-1}^{(j)}.
\end{equation}
She also proved that they have the following virtual description  \cite[Thm. 3.4]{WhitehouseInduced}:
\begin{equation}
\label{Whitehouse-virtual-description}
F^{(j)}_n = \sum_{i=1}^j
   \left( E^{(i)}_{n-1} \uparrow^{\symm_n}_{\symm_{n-1}} - E_n^{(i)} \right).
\end{equation}

Similarly to the Eulerian representations $E^{(j)}_n$ for $j=n$ and $j=1$,
the representation $F^{(n-1)}_n$ is the sign representation of $\symm_n$, while
$\sgn \otimes F^{(1)}_n$ is known as the 
{\it Whitehouse representation}\footnote{It is called the {\it tree representation} in work of Robinson and Whitehouse \cite{RobinsonWhitehouse},
and appears elsewhere, e.g., Sundaram \cite{Sundaram}}
 of $\symm_n$. 
Thus \eqref{Whitehouse-lifting-property} implies
$F^{(1)}_n \downarrow^{\symm_n}_{\symm_{n-1}} \cong \sgn \otimes \Lie_{n-1} (\cong E_{n-1}^{(1)})$. 
In work on cyclic operads, Getzler and Kapranov \cite{GetzlerKapranov} showed that
\begin{equation}
\label{Getzler-Kapranov-observation}
(E_n^{(1)} \cong ) \,\, \sgn \otimes \Lie_{n} \cong V^{(n-1,1)} \otimes F^{(1)}_n.
\end{equation}
Here $V^\lambda$ denotes the irreducible  $\symm_n$-representation indexed by
a partition $\lambda$ of the number $n$; in particular, $V^{(n-1,1)}$ 
is the irreducible {\it reflection} representation of $\symm_n$.
Our first goal is to strengthen \eqref{Getzler-Kapranov-observation} to a graded statement that
characterizes Whitehouse's lifts $F^{(j)}_n$, and which is proven in Section~\ref{graded-characterization-section} below.
Consider the Grothendieck ring $R(\symm_n)[[t]]$ of graded $\QQ \symm_n$-modules, 
where $V \cdot t^k$ represents the
class of the representation $V[k]$ which is $V$ considered as living in degree $k$.
\begin{proposition}
\label{graded-characterization}
The equations \eqref{Whitehouse-virtual-description} defining $F^{(j)}_n$ in terms of $E^{(i)}_n$ are equivalent to this relation 
in $R(\symm_n)[[t]]$:
$$
\sum_{j=1}^n E^{(j)}_n t^{n-j} 
 = \left( 1 + t \, V^{(n-1,1)}  \right) \,\,
  \sum_{j=1}^{n-1} F^{(j)}_n t^{n-1-j} 
$$
\end{proposition}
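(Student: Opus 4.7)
The plan is to compare coefficients of each power of $t$ in the claimed identity, then show the resulting system of equations in the representation ring is equivalent to \eqref{Whitehouse-virtual-description} via the projection formula together with Whitehouse's lifting property \eqref{Whitehouse-lifting-property}.

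First I would extract the coefficient of $t^{n-j}$ on each side. Adopting the convention $F^{(0)}_n = F^{(n)}_n = 0$, the left-hand side contributes $E^{(j)}_n$, while distributing $(1 + tV^{(n-1,1)})$ across the right-hand sum yields $F^{(j-1)}_n + V^{(n-1,1)} \otimes F^{(j)}_n$. Thus the generating-function identity is equivalent to the system
$$E^{(j)}_n \,=\, F^{(j-1)}_n + V^{(n-1,1)} \otimes F^{(j)}_n \qquad (1 \le j \le n).$$

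Next I would observe that $1 + V^{(n-1,1)}$ is the natural $\symm_n$-permutation representation on $\{1, 2, \ldots, n\}$, and hence equals $\mathbf{1}_{\symm_{n-1}}\uparrow^{\symm_n}_{\symm_{n-1}}$. The projection formula
$$\bigl(W\uparrow^{\symm_n}_{\symm_{n-1}}\bigr) \otimes V \,\cong\, \bigl(W \otimes V\downarrow^{\symm_n}_{\symm_{n-1}}\bigr)\uparrow^{\symm_n}_{\symm_{n-1}}$$
applied with $W = \mathbf{1}$ and $V = F^{(j)}_n$, followed by the lifting property \eqref{Whitehouse-lifting-property}, gives
$$(1 + V^{(n-1,1)}) \otimes F^{(j)}_n \,\cong\, \bigl(F^{(j)}_n \downarrow^{\symm_n}_{\symm_{n-1}}\bigr)\uparrow^{\symm_n}_{\symm_{n-1}} \,\cong\, E^{(j)}_{n-1}\uparrow^{\symm_n}_{\symm_{n-1}}.$$
Substituting $V^{(n-1,1)} \otimes F^{(j)}_n = E^{(j)}_{n-1}\uparrow^{\symm_n}_{\symm_{n-1}} - F^{(j)}_n$ into the previous system rearranges it to the telescoping recursion
$$F^{(j)}_n - F^{(j-1)}_n \,=\, E^{(j)}_{n-1}\uparrow^{\symm_n}_{\symm_{n-1}} - E^{(j)}_n,$$
which sums (starting from $F^{(0)}_n = 0$) to \eqref{Whitehouse-virtual-description}. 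All steps being reversible, the two presentations are equivalent.

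I do not anticipate a serious obstacle; the argument is formal once one recognizes that the factor $1 + t V^{(n-1,1)}$ encodes induction from $\symm_{n-1}$ through the projection formula. The main care required is in the boundary cases: the $j=n$ equation uses $F^{(n)}_n = 0$ and recovers $E^{(n)}_n \cong F^{(n-1)}_n \cong \sgn$, while the $j=1$ equation uses $F^{(0)}_n = 0$ and reproduces the Getzler-Kapranov identity \eqref{Getzler-Kapranov-observation}.
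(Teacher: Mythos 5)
Your proof is correct and follows essentially the same route as the paper: both extract the coefficient of $t^{n-j}$ to reduce the generating-function identity to $E^{(j)}_n = F^{(j-1)}_n + V^{(n-1,1)}\otimes F^{(j)}_n$, then use the identity $V^{(n-1,1)}\otimes U = (U\downarrow^{\symm_n}_{\symm_{n-1}})\uparrow^{\symm_n}_{\symm_{n-1}} - U$ together with the lifting property \eqref{Whitehouse-lifting-property} to reach the telescoping recursion equivalent to \eqref{Whitehouse-virtual-description}. The only cosmetic difference is that the paper cites this last identity directly (Stanley, Exer.\ 7.81), whereas you derive it on the spot from $1+V^{(n-1,1)} = \mathbf{1}\uparrow^{\symm_n}_{\symm_{n-1}}$ and the projection formula; this is the standard proof of that exercise, so the arguments coincide.
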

\noindent
In particular, comparing coefficients of $t^{n-1}$ on the two sides of the proposition recovers \eqref{Getzler-Kapranov-observation}; 
see also \S\ref{Mathieu-remark}.

An observation of Moseley, Proudfoot and Young \cite{MoseleyProudfootYoung} 
leads to our next proposition, a configuration space interpretation for 
$F^{(j)}_n$ that parallels the interpretation \eqref{Eulerian-topological-interpretation} for $E^{(j)}_n$.  To state it, recall that the  
{\it special unitary group} $SU_2$ consists of all 
$2 \times 2$ complex matrices $A$ having $\bar{A}^T A = I$ and $\det(A)=1$, 
and is homeomorphic to the $3$-sphere $\sphere^3$.

\vskip.1in
\noindent
{\bf Definition.}
Consider the diagonal action of $SU_2$ on the Cartesian product $(SU_2)^n$ 
and on $\Pconf^nSU_2$, and define $X_n$ to be the quotient space under 
this diagonal action:
$$
X_n:=\Pconf^nSU_2 / SU_2.
$$
\vskip.1in

In fact, if one includes $\RR^3$ into $SU_2$ 
as the subspace $\RR^3 =SU_2 \setminus \{1\} \subset SU_2$, one can check that
$X_n$ is ($\symm_{n-1}$-equivariantly) homeomorphic to 
$\Pconf^{n-1}\RR^3$ via these inverse homeomorphisms:
\begin{equation}
\label{inverse-homeomorphisms}
\begin{array}{rcl}
X_n= \Pconf^nSU_2/SU_2 & \overset{h}{\longrightarrow} & \Pconf^{n-1}\RR^3 \\
  \overline{(p_1,\ldots,p_{n-1},p_n)} &  \overset{h}{\longmapsto}&
     (p_{n}^{-1}p_1,\ldots,p_{n}^{-1}p_{n-1}) \\
 \overline{ (p_1,\ldots,p_{n-1},1) }  &  \overset{h^{-1}}{\mapsfrom} & (p_1,\ldots,p_{n-1}) 
\end{array}
\end{equation}

Our next goal is the following proposition, proven in Section~\ref{MPY-section} below by re-packaging
a proof from \cite{MoseleyProudfootYoung}.

\begin{proposition}
\label{MPY-proposition}
\label{Whitehouse-lifts-topological-interpretation} 
For $j=1,2,\ldots,n-1$, the isomorphism of 
$\symm_{n-1}$-representations from \eqref{Eulerian-topological-interpretation} at $d=3$
$$
E^{(j)}_{n-1} \cong \sgn \otimes  H^{2(n-1-j)} \Pconf^{n-1}\RR^3,
$$
lifts to an isomorphism of $\symm_n$-representations 
$$
F^{(j)}_n \cong \sgn \otimes  H^{2(n-1-j)}X_n.
$$
\end{proposition}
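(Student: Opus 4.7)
The plan is to repackage Moseley-Proudfoot-Young's analysis \cite{MoseleyProudfootYoung}, assembling two long exact sequences to pin down $H^*(X_n)$ as an $\symm_n$-module, then matching the result with Whitehouse's virtual formula \eqref{Whitehouse-virtual-description}. First, observe that $\pi\colon \Pconf^n SU_2 \to X_n$ is a principal $SU_2$-bundle, $\symm_n$-equivariantly, with a global section provided by the map $h^{-1}$ in \eqref{inverse-homeomorphisms}. Hence the bundle is (non-equivariantly) trivial, so the $\symm_n$-equivariant Serre spectral sequence degenerates; because $\symm_n$ acts trivially on $H^*(SU_2)$ and $\QQ\symm_n$-modules are semisimple, this produces a graded $\symm_n$-module isomorphism
\[
H^k(\Pconf^n SU_2) \;\cong\; H^k(X_n) \;\oplus\; H^{k-3}(X_n).
\]

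Next, identifying $SU_2 \cong \sphere^3 = \RR^3 \cup \{\infty\}$, the open inclusion $\Pconf^n\RR^3 \hookrightarrow \Pconf^n\sphere^3$ has closed complement $Y = \bigsqcup_{i=1}^n Y_i$, where $Y_i := \{p_i = \infty\} \cong \Pconf^{n-1}\RR^3$, giving $H^*(Y) \cong H^*(\Pconf^{n-1}\RR^3)\uparrow^{\symm_n}_{\symm_{n-1}}$ as $\symm_n$-modules. Each $Y_i$ has real codimension $3$ with canonically oriented normal bundle $T_\infty\sphere^3$; combining the Gysin long exact sequence with the vanishing of odd-degree cohomology for $\Pconf^n\RR^3$ and $\Pconf^{n-1}\RR^3$, together with the first step's identification $H^{\mathrm{odd}}(\Pconf^n\sphere^3) \cong H^{\mathrm{odd}-3}(X_n)$, we obtain the four-term exact sequence of $\symm_n$-modules
\[
0 \to H^{2\ell}(X_n) \to H^{2\ell}(\Pconf^n\RR^3) \to H^{2\ell-2}(\Pconf^{n-1}\RR^3)\uparrow^{\symm_n}_{\symm_{n-1}} \to H^{2\ell-2}(X_n) \to 0.
\]

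In the Grothendieck ring of $\symm_n$-modules, this yields the telescoping recursion $H^{2\ell}(X_n) - H^{2\ell-2}(X_n) = H^{2\ell}(\Pconf^n\RR^3) - H^{2\ell-2}(\Pconf^{n-1}\RR^3)\uparrow^{\symm_n}_{\symm_{n-1}}$. Tensoring with $\sgn$, invoking \eqref{Eulerian-topological-interpretation} at $d=3$, and using the identity $\sgn \otimes (V\uparrow^{\symm_n}_{\symm_{n-1}}) = (\sgn \otimes V)\uparrow^{\symm_n}_{\symm_{n-1}}$ rewrites the telescope as
\[
\sgn \otimes H^{2L}(X_n) \;=\; \sum_{i=n-L}^{n} E^{(i)}_n \;-\; \sum_{i=n-L}^{n-1} E^{(i)}_{n-1}\uparrow^{\symm_n}_{\symm_{n-1}}.
\]
Finally, completeness of the Eulerian idempotents gives $\sum_{i=1}^n E^{(i)}_n = \QQ\symm_n = \sum_{i=1}^{n-1} E^{(i)}_{n-1}\uparrow^{\symm_n}_{\symm_{n-1}}$, which (after a simple reindexing) converts the right-hand side into Whitehouse's formula \eqref{Whitehouse-virtual-description} for $F^{(n-1-L)}_n$; setting $L = n-1-j$ yields the claimed $F^{(j)}_n \cong \sgn \otimes H^{2(n-1-j)}(X_n)$.

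The main obstacle lies in the $\symm_n$-equivariance of the Gysin sequence: one must verify that permuting the submanifolds $Y_i$ introduces no sign twist on the Thom classes, despite the odd codimension $3$. This follows from the canonical identification of the normal bundles $T_\infty\sphere^3$ across the various $Y_i$, but demands careful bookkeeping of the $(-1)^3$ signs arising from swapping odd-dimensional factors in a product orientation.
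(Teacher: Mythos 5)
Your proposal is correct, and it takes a genuinely different route from the paper's. The paper first reduces the claim, via Proposition~\ref{graded-characterization}, to establishing the graded tensor-product decomposition $H^*\Pconf^n\RR^3 \cong \left(1 \oplus V^{(n-1,1)}[2]\right) \otimes H^*X_n$, and proves that by applying the Leray--Serre spectral sequence to the single fibration $F \rightarrow X_{n+1} \rightarrow X_n$ with fiber $F = SU_2 \setminus \{p_1,\ldots,p_n\}$, identifying $H^*F \cong 1 \oplus V^{(n-1,1)}[2]$ and using even-degree vanishing for the collapse. You instead combine two ingredients: the (trivializable) principal bundle $SU_2 \rightarrow \Pconf^n SU_2 \rightarrow X_n$ to split $H^k(\Pconf^n SU_2) \cong H^k(X_n) \oplus H^{k-3}(X_n)$, and the long exact sequence of the open inclusion $\Pconf^n\RR^3 \hookrightarrow \Pconf^n\sphere^3$ with Thom isomorphism on the complement $Y = \bigsqcup_i Y_i$; even-degree vanishing chops this into four-term exact sequences, which you then telescope in the Grothendieck ring and match directly against Whitehouse's virtual formula \eqref{Whitehouse-virtual-description}, invoking completeness of the Eulerian idempotents. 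The paper's route is tidier and structurally stronger (one spectral sequence, and it yields the multiplicative tensor-product statement from which the degree-by-degree claim drops out); your route bypasses any direct analysis of $H^*(\text{punctured } \sphere^3)$ as an $\symm_n$-module and bypasses Proposition~\ref{graded-characterization}, at the cost of the two-sequence bookkeeping and the telescoping sum.

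On the ``main obstacle'' you flag: there is in fact no $(-1)^3$ sign to worry about. The Thom isomorphism $H^k_Y(\Pconf^n\sphere^3) \cong H^{k-3}(Y)$ depends only on an orientation of the \emph{normal} bundle of $Y$, not on an orientation of the ambient manifold; each $\nu_{Y_i}$ is canonically the trivial bundle $Y_i \times T_\infty\sphere^3$, and any $\sigma \in \symm_n$ sends $\nu_{Y_i}$ to $\nu_{Y_{\sigma(i)}}$ acting as the identity on the $T_\infty\sphere^3$ factor, so Thom classes are carried to Thom classes without sign twist. (The orientation-reversing behavior of transpositions on $(\sphere^3)^n$ itself, which is the source of the $(-1)^3$ you mention, never enters.) Similarly, for the $\symm_n$-equivariance of the first step, one should note that the $\symm_{n-1}$-equivariant section $h^{-1}$ only gives a non-equivariant trivialization and hence only a non-equivariant splitting; the $\symm_n$-module splitting then follows from semisimplicity of $\QQ\symm_n$, once one checks that the constant local system $\mathcal{H}^3(SU_2)$ on $X_n$ carries the trivial $\symm_n$-action, which holds because $\sigma$ identifies the fiber over $x$ with the fiber over $\sigma x$ by the identity on the underlying copy of $SU_2$.
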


Our main result is Theorem~\ref{subalgebra-theorem} below.
It identifies the cohomology ring $H^*X_n$ as a concrete subalgebra 
$\mathcal{V}^n$ 
of the ring $\mathcal{U}^n \cong H^*\Pconf^n(\RR^3)$ 
from \eqref{Cohen-presentation}.

\vskip.1in
\noindent
{\bf Definition.}
Let $\mathcal{V}^n$ denote the subalgebra of $\mathcal{U}^n$
generated by the elements
$$
v_{ijk}:=u_{ij}+u_{jk}+u_{ki}
$$
for triples of distinct integers $(i,j,k)$ with $1 \leq i,j,k \leq n$.  
\vskip.1in

Using the inclusion $\RR^3 \hookrightarrow \sphere^3 = SU_2$, define a composite $h \circ \pi \circ i$:
\begin{equation}
\label{topological-composite-sequence}
\xymatrix{
\Pconf^n(\RR^3) \ar[r]^-{i}&  \Pconf^nSU_2 \ar[r]^-{\pi} & \Pconf^n(SU_2)/SU_2 =X_n \ar[r]^-{h} &\Pconf^{n-1}\RR^3                                    &
}
\end{equation}

\begin{theorem}
\label{subalgebra-theorem}
For all $n \geq 3$, the $\QQ$-algebra map determined by 
\begin{equation}
\label{map-on-generators}
\begin{array}{rcl}
\QQ[u_{ij}]_{1 \leq i \neq j \leq n-1} &\overset{\varphi}{\longrightarrow} & \QQ[u_{ij}]_{1 \leq i \neq j \leq n} \\
u_{ij} & \longmapsto & v_{ijn}
\end{array}
\end{equation}
has the following properties.
\begin{enumerate}
\item[(i)] $\varphi$ induces an $\symm_{n-1}$-equivariant injective algebra map $\overline{\varphi}: \mathcal{U}^{n-1} \hookrightarrow  \mathcal{U}^n$.
\item[(ii)] The injection $\overline{\varphi}$ maps $\mathcal{U}^{n-1}$ isomorphically onto the subalgebra $\mathcal{V}^n$ inside $\mathcal{U}^n$ generated by $\{v_{ijk}\}$.
\item[(iii)] 
There exists a scalar $c$ in $\QQ^\times$ such that 
$\overline{\varphi}$  is identified with $c \cdot (h \circ \pi \circ i)^*$, where $(h \circ \pi \circ i)^*$ is the $\symm_{n-1}$-equivariant composite of the maps on cohomology in the top row here:
$$
\xymatrix{
H^*\Pconf^n\RR^3  \ar@{=}[d]  
  &  & H^*X_n  \ar[ll]_-{(\pi \circ i)^*}   
       &H^*\Pconf^{n-1}\RR^3 \ar[l]_-{h^*}  \ar@{=}[d] \\
\mathcal{U}^n& & \mathcal{V}^{n}\ar[ll] \ar[u] &\mathcal{U}^{n-1}\ar[l] 
}
$$
\noindent
The middle vertical map is a grade-doubling $\symm_n$-equivariant 
algebra isomorphism, showing that
$$
\left( F^{(n-1-j)}_n \cong \right) H^{2j} X_n \cong (\mathcal{V}^{n})_{j}.
$$
where $(\mathcal{V}^{n})_j$ denotes the $j^{th}$ graded component of the ring $\mathcal{V}^n$.
\end{enumerate}
\end{theorem}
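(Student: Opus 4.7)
My plan is to split the proof into three pieces: an algebraic verification for Part~(i) and the surjection in Part~(ii); a fibration argument for injectivity of $(h\circ\pi\circ i)^*$; and a topological computation matching $\overline{\varphi}$ with a scalar multiple of $(h\circ\pi\circ i)^*$, with Proposition~\ref{MPY-proposition} providing the dimensional input. First, to verify that $\varphi$ descends to a well-defined ring map $\overline{\varphi}\colon\mathcal{U}^{n-1}\to\mathcal{U}^n$, I check that each defining relation of $\mathcal{U}^{n-1}$ maps into the ideal $I$ of $\mathcal{U}^n$: the antisymmetry relation $u_{ij}+u_{ji}$ maps to $v_{ijn}+v_{jin}=0$ by pairwise antisymmetry; the squaring relation $u_{ij}^2$ maps to $v_{ijn}^2$, which after using $u_{ab}^2=0$ becomes twice the Arnold relation on the triangle $\{i,j,n\}$, hence $0$; and the Arnold relation maps to the nine-term sum $v_{ijn}v_{jkn}+v_{jkn}v_{kin}+v_{kin}v_{ijn}$, which reduces via squaring and antisymmetry to a combination of the Arnold relations on the four triangles of the tetrahedron $\{i,j,k,n\}$. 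For the surjection in (ii), $\overline{\varphi}(u_{ij})=v_{ijn}\in\mathcal{V}^n$, while the identity $v_{ijk}=v_{ijn}+v_{jkn}+v_{kin}$ (a direct expansion using pairwise antisymmetry) shows every generator $v_{ijk}$ of $\mathcal{V}^n$ lies in the image, giving a surjection $\overline{\varphi}\colon\mathcal{U}^{n-1}\twoheadrightarrow\mathcal{V}^n$.

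Next, I obtain injectivity topologically by observing that $h\circ\pi\circ i\colon\Pconf^n\RR^3\to\Pconf^{n-1}\RR^3$ --- explicitly $(p_1,\ldots,p_n)\mapsto(p_n^{-1}p_1,\ldots,p_n^{-1}p_{n-1})$ under $\RR^3\cong SU_2\setminus\{1\}$ --- is a Hurewicz fibration: it is the restriction of the principal $SU_2$-bundle $\pi$ to the open subspace $\Pconf^n\RR^3\subset\Pconf^n SU_2$, followed by the homeomorphism $h$. Its fibers are $SU_2$ minus $n$ points, homotopy equivalent to $\bigvee_{n-1}\sphere^2$, so their cohomology is concentrated in degrees $0$ and $2$. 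Since the base $\Pconf^{n-1}\RR^3$ also has cohomology concentrated in even degrees, the Leray--Serre spectral sequence degenerates at $E_2$ for parity reasons: the only candidate nonzero differential $d_3\colon E_3^{p,2}\to E_3^{p+3,0}$ targets an odd-total-degree cell and hence vanishes. Consequently the edge homomorphism $H^*\Pconf^{n-1}\RR^3\hookrightarrow H^*\Pconf^n\RR^3$ --- namely $(h\circ\pi\circ i)^*$ --- is injective.

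To finish Part~(iii), I match $\overline{\varphi}$ with a scalar multiple of $(h\circ\pi\circ i)^*$ on the degree-one generators $u_{ij}$ of $\mathcal{U}^{n-1}$, which suffices because both sides are ring maps. Since $u_{ij}\in H^2\Pconf^{n-1}\RR^3$ is pulled back from $H^2\Pconf^2\RR^3$ along the projection to coordinates $(i,j)$, the computation reduces, via the projection $\Pconf^n\RR^3\to\Pconf^3\RR^3$ onto coordinates $(i,j,n)$, to analyzing the pullback under the model map $g\colon\Pconf^3\RR^3\to\Pconf^2\RR^3$, $g(p,q,r)=(r^{-1}p,r^{-1}q)$. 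The $\symm_2$-equivariance of $g$ (swapping $p,q$) forces $g^*(u_{12})=\alpha u_{ij}+\gamma(u_{in}-u_{jn})$, and $(g^*u_{12})^2=g^*(u_{12}^2)=0$ forces $\gamma(\alpha+\gamma)=0$ after invoking the Arnold relation on $\{i,j,n\}$. A small explicit degree calculation --- for instance, restricting to a 2-cycle on which $p$ ranges over a small sphere around $r$ in $\RR^3$ while $q$ stays fixed, and computing the degree of the induced map to $\sphere^2$ via stereographic coordinates on $SU_2\cong\sphere^3$ near the identity --- then yields $\gamma\ne 0$, so $\alpha=-\gamma$ and $g^*(u_{12})=-\gamma\cdot v_{ijn}$, with $-\gamma\in\QQ^\times$ independent of $(i,j)$. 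Combined with the earlier paragraphs, $\overline{\varphi}$ differs from the injective composite $(h\circ\pi\circ i)^*=(\pi\circ i)^*\circ h^*$ by a nonzero scalar on generators, so is itself injective, yielding the $\symm_{n-1}$-equivariant algebra isomorphism $\mathcal{U}^{n-1}\xrightarrow{\sim}\mathcal{V}^n$ of (i)--(ii). Since $h^*$ is an isomorphism, $(\pi\circ i)^*\colon H^*X_n\xrightarrow{\sim}\mathcal{V}^n$ is the middle vertical isomorphism of (iii), giving $H^{2j}X_n\cong(\mathcal{V}^n)_j$, whose $\symm_n$-module identification with $\sgn\otimes F^{(n-1-j)}_n$ is supplied by Proposition~\ref{MPY-proposition}.

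The hard part is the degree calculation showing $\gamma\ne 0$: this encodes the non-abelianness of $SU_2$, since in an abelian group $r^{-1}p-r^{-1}q$ is proportional to $p-q$, which would force $\gamma=0$ and make $\overline{\varphi}$ the trivial inclusion $u_{ij}\mapsto u_{ij}$ rather than the nontrivial $u_{ij}\mapsto v_{ijn}=u_{ij}+u_{jn}+u_{ni}$ that the theorem requires.
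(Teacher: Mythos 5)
Your structure for (i) and (ii) is sound: the verification that $\varphi$ sends the relations of $\mathcal U^{n-1}$ into the ideal $I$ of $\mathcal U^n$ is correct, and the identity $v_{ijk}=v_{ijn}+v_{jkn}+v_{kin}$ does put every generator of $\mathcal V^n$ in the image. But you defer the injectivity of $\overline\varphi$ to the topological argument in (iii), which makes parts (i)--(ii) depend on (iii). The paper keeps (i) self-contained by exhibiting an algebraic retraction $\overline\psi\colon\mathcal U^n\to\mathcal U^{n-1}$ (send $u_{in},u_{nj}\mapsto 0$, fix the remaining $u_{ij}$) and checking $\overline\psi\circ\overline\varphi=\mathrm{id}$; this one-line argument is worth adopting.

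The genuine gap is in (iii). Having reduced to the model map $g\colon\Pconf^3\RR^3\to\Pconf^2\RR^3$, you use $\symm_2$-equivariance and the squaring constraint to pin $g^*(u_{12})$ down to $\alpha u_{ij}+\gamma(u_{in}-u_{jn})$ with $\gamma(\alpha+\gamma)=0$, and then appeal to ``a small explicit degree calculation'' to rule out $\gamma=0$. That computation is not carried out, and the cycle you propose (a small sphere around $r$, $q$ fixed) is substantially harder to analyze than the paper's: under the chart $\RR^3\cong SU_2\setminus\{1\}$, the point $r^{-1}(r+\epsilon v)$ escapes to infinity as $\epsilon\to 0$, so there is no simplifying limit and one must track the derivative of left-translation through the stereographic chart, including orientations. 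By contrast, the paper's cycle $v\mapsto(v,0,p)$ admits the limit $p\to\infty$, making $p^{-1}\to 1$ and reducing the self-map of $\sphere^2$ to the identity.

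You can close the gap without any degree computation, since you already hold the needed pieces. The map $\pi\circ i\colon\Pconf^3\RR^3\to X_3$ is $\symm_3$-equivariant, not just $\symm_2$-equivariant (only $h$ singles out the last coordinate). Since $\dim H^2X_3=1$ and $(\mathcal U^3)_1\cong V^{(1,1,1)}\oplus V^{(2,1)}$ has $\QQ v_{123}$ as its unique $1$-dimensional $\symm_3$-stable subspace, the image $(\pi\circ i)^*(H^2X_3)$ is either zero or exactly $\QQ v_{123}$; your spectral-sequence argument already rules out zero. Hence $g^*(u_{12})\in\QQ^\times\cdot v_{123}$, giving $\gamma=-\alpha\neq 0$ immediately. (This is the paper's $\symm_3$-equivariance argument, with your spectral-sequence injectivity substituted for their degree computation.) One more correction: the restriction of a fibration to an open subset of the total space is not in general a fibration, so your justification that $h\circ\pi\circ i$ is a fibration needs fixing; the clean way is to identify it with the forgetful map $X_{n+1}\to X_n$ via $h$, as in the proof of Proposition~\ref{MPY-proposition}.
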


After proving Propositions~\ref{graded-characterization},  \ref{MPY-proposition}, and Theorem~\ref{subalgebra-theorem} in the next three sections, the last section concludes with some remarks on the relation of this paper to results of
Mathieu \cite{Mathieu2} and of d'Antonio and Gaiffi \cite{d'AntonioGaiffi}, and to a conjecture of
Moseley, Proudfoot, and Young \cite{MoseleyProudfootYoung}.

\section{Proof of Proposition~\ref{graded-characterization}}
\label{graded-characterization-section}

Taking coefficients of $t^{n-j}$ on both sides of the equation in the proposition
$$
\sum_{j=1}^n E^{(j)}_n t^{n-j} 
 = \left( 1 + t \, V^{(n-1,1)}  \right) \,\,
  \sum_{j=1}^{n-1} F^{(j)}_n t^{n-1-j}, 
$$
one sees that it is equivalent to assertion for $1 \leq j \leq n$ that 
\begin{equation}
\label{coefficient-of-t^j-relation}
E_n^{(j)} =F_n^{(j-1)} \,\, \oplus \,\, V^{(n-1,1)} \otimes F_n^{(j)}, 
\end{equation}
with conventions $F^{(0)}_n:=0, F^{(n)}_n:=0$.
Use the fact \cite[Exer. 7.81]{Stanley-EC2}
that any virtual $\symm_n$-module $U$ satisfies
$$
V^{(n-1,1)} \otimes U = \left( U \downarrow^{\symm_n}_{\symm_{n-1}} \right) \uparrow^{\symm_n}_{\symm_{n-1}} - U,
$$
to rewrite \eqref{coefficient-of-t^j-relation} in the equivalent form 
$$
\begin{array}{rccccccl}
E_n^{(j)} &=&F_n^{(j-1)}& \oplus& \left( F_n^{(j)} \downarrow^{\symm_n}_{\symm_{n-1}}\right) &\uparrow^{\symm_n}_{\symm_{n-1}}& -&F_n^{(j)} \\
 &=&F_n^{(j-1)}&\oplus&E_{n-1}^{(j)} &\uparrow^{\symm_n}_{\symm_{n-1}}&-&F_n^{(j)}
\end{array}
$$
where the second equality used \eqref{Whitehouse-lifting-property}.  
This last equation can be rewritten as follows
$$
 F_n^{(j)} - F_n^{(j-1)}= E_{n-1}^{(j)} \uparrow^{\symm_n}_{\symm_{n-1}} 
 - E_n^{(j)},
$$
which is equivalent to \eqref{Whitehouse-virtual-description}. 
This completes the proof of the proposition.

\section{Proof of Proposition~\ref{MPY-proposition}}
\label{MPY-section}

The proposition asserts
$
F^{(j)}_n \cong \sgn \otimes  H^{2(n-1-j)}X_n
$
for $j=1,2,\ldots,n-1$.
However, in light of \eqref{Eulerian-topological-interpretation} and
Proposition~\ref{graded-characterization},
one sees that this is equivalent to exhibiting
an isomorphism of graded $\symm_n$-representations:
\begin{equation}
\label{MPY-observation}
H^*\Pconf^n\RR^3
 \cong \left( 1\,\,\, \oplus \,\,\, V^{(n-1,1)}[2] \right) \otimes
            H^*X_n.
\end{equation}
Recall $V^{(n-1,1)}[2]$ means the $\symm_n$-irreducible $V^{(n-1,1)}$
as a graded representation concentrated in degree $2$.

In fact, \eqref{MPY-observation} was already proven by Moseley, Proudfoot,
and Young in \cite[Prop. 2.5]{MoseleyProudfootYoung}; we simply repeat their proof here
for self-containment. Forgetting the $(n+1)^{st}$ coordinate induces $\symm_n$-equivariant maps
$$
\begin{array}{rcl}
(SU_2)^{n+1} & \longrightarrow & (SU_2)^n\\
\Pconf^{n+1}(SU_2) & \longrightarrow &\Pconf^n(SU_2),\\
X_{n+1} &\longrightarrow&X_n.
\end{array}
$$
This last map gives rise to a fibration sequence
$
F \rightarrow X_{n+1} \rightarrow X_n,
$
in which the fiber 
$
F:=SU_2 \setminus \{p_i\}_{i=1}^n
$
is the $3$-sphere $SU_2$ punctured at $n$ points. 
Note that the cohomology $H^*F$ has this simple description:
\begin{itemize}
\item $H^0 F=\QQ$ with trivial $\symm_n$-action, since $F$ is connected, and
\item $H^2 F$ is spanned by
the $n$ cocycles $\{z_i\}$ dual to cycles that go around the 
$n$ punctures $\{p_i\}_{i=1}^n$, permuted by
$\symm_n$,  and satisfying the relation $\sum_{i=1}^n z_i=0$ as in $V^{(n-1,1)}$.
\end{itemize}
Thus as a graded $\symm_n$-representation,
$
H^*(F) = 1\,\,\, \oplus \,\,\, V^{(n-1,1)}[2],
$
the first tensor factor on the right of \eqref{MPY-observation}.

We claim that \eqref{MPY-observation} then follows by applying
the Leray-Serre spectral sequence to 
$
F \rightarrow X_{n+1} \rightarrow X_n.
$
From \eqref{inverse-homeomorphisms}, the base 
$X_n$ is homeomorphic to $\Pconf^{n-1}(\RR^3)$,
and therefore simply connected when $n \geq 2$, since $\Pconf^{n-1}(\RR^3)$ is
the complement of an arrangement of linear subspaces of real codimension three inside $\RR^{3(n-1)}$.
Therefore the fibration spectral sequence
will have no twisted coefficients.  
Furthermore, since both the base $X_n$ and fiber $F$ 
have cohomology only in even degrees,
the spectral sequence collapses at the first page, giving an $\symm_n$-equivariant isomorphism
\begin{equation}
\label{Leray-Serre-consequence}
H^*X_{n+1}
  \cong H^*F \otimes H^*X_n.
\end{equation}
This is exactly the desired $\symm_n$-isomorphism as in  \eqref{MPY-observation},
noting that their left sides agree due to the homeomorphism $h$ in  
\eqref{inverse-homeomorphisms} with $n$ replaced by $n+1$.  This completes the proof of the proposition.

\section{Proof of Theorem~\ref{subalgebra-theorem}}
\label{subalgebra-section}

Each of the parts (i),(ii),(iii) in the theorem will be proven in a separate subsection below.

\subsection{Proof of Theorem~\ref{subalgebra-theorem}(i)}

Part (i) of the theorem asserts that, letting  $v_{ijk}:=u_{ij}+u_{jk}+u_{ki}$, the
$\QQ$-algebra map determined by 
$$
\begin{array}{rcl}
\QQ[u_{ij}]_{1 \leq i \neq j \leq n-1} &\overset{\varphi}{\longrightarrow}&
\QQ[u_{ij}]_{1 \leq i \neq j \leq n}\\
u_{ij}  &\longmapsto& v_{ijn}
\end{array}
$$
induces a $\symm_{n-1}$-equivariant injective algebra map 
$\overline{\varphi}: \mathcal{U}^{n-1} \hookrightarrow \mathcal{U}^n$.

The $\symm_{n-1}$-equivariance follows since $\symm_{n-1}$ permutes the subscripts $1,2,\ldots,n-1$ on the $u_{ij}$.

To see that $\varphi$ induces a well-defined map 
$\overline{\varphi}: \mathcal{U}^{n-1} \longrightarrow \mathcal{U}^n$, start with the 
easy check (left to the reader) that these 
three identities involving $v_{ijk}$ hold in $\mathcal{U}^n$:
\begin{itemize}
\item[(a)] $v_{ijk}^2=0$.
\item[(b)] $v_{ijk}$ is {\it antisymmetric} in $(i,j,k)$, meaning $v_{\sigma(i),\sigma(j),\sigma(k)} = \sgn(\sigma) v_{ijk}$
for all $\sigma$ in $\symm_3$.
\item[(c)] $v_{ij\ell} v_{jk\ell}+v_{jk\ell} v_{ki\ell} +  v_{ki\ell}v_{ij\ell}=0$.
\end{itemize}
The special cases of (a),(b) with $k=n$ and (c) with $\ell=n$
show that $\varphi$ descends to a map  
$\mathcal{U}^{n-1} \longrightarrow \mathcal{U}^n$,
since $\varphi$ sends each defining relation
$u_{ij}^2$ and $u_{ij}+u_{ji}$ and $u_{ij} u_{jk}+u_{jk} u_{ki}+u_{ki} u_{ij}$ 
in $\mathcal{U}^{n-1}$ to zero in $\mathcal{U}^{n}$.

To see 
$\overline{\varphi}: \mathcal{U}^{n-1} \longrightarrow \mathcal{U}^n$ 
is injective, define a $\QQ$-algebra map 
$
\psi: \QQ[u_{ij}]_{1 \leq i \neq j \leq n} \longrightarrow
\QQ[u_{ij}]_{1 \leq i \neq j \leq n-1}
$
by
$$
u_{ij} \longmapsto 
\begin{cases} 
u_{ij} &\text{ if }i,j \leq n-1,\\
0  &\text{ if either }i=n\text{ or }j=n.
\end{cases}
$$ 
This $\psi$ 
satisfies $\psi(v_{ijn})=u_{ij}$ and hence $(\psi \circ \varphi)(u_{ij})=u_{ij}$
for $1 \leq i \neq j \leq n-1$.
On the other hand, it is easy to check that $\psi$ descends to a map
$\overline{\psi}: \mathcal{U}^n \longrightarrow \mathcal{U}^{n-1}$,
as it sends each defining relation 
$u_{ij}^2$ and $u_{ij}+u_{ji}$ and $u_{ij} u_{jk}+u_{jk} u_{ki}+u_{ki} u_{ij}$ 
from $\mathcal{U}^{n}$ to zero in $\mathcal{U}^{n-1}$.
Thus $\overline{\psi} \circ \overline{\varphi}=1_{\mathcal{U}_{n-1}}$, so
$\overline{\varphi}$ is injective\footnote{The authors thank D. Grinberg for suggesting this argument.}.

\subsection{Proof of Theorem~\ref{subalgebra-theorem}(ii)}

Recall part (ii) of the theorem asserts that
the injection $\overline{\varphi}: \mathcal{U}^{n-1} \hookrightarrow \mathcal{U}^n$
is a ring isomorphism from $\mathcal{U}^{n-1}$ onto the subalgebra $\mathcal{V}^n$ inside $\mathcal{U}^n$ 
generated by $\{v_{ijk}\}$.

By construction, since $\varphi$ sends $u_{ij}$ to $v_{ijn}$, the image 
$\im(\overline{\varphi})$ lies in $\mathcal{V}^n$.
To see that $\im(\overline{\varphi})=\mathcal{V}^{n}$, one needs to know that for $1 \leq i,j,k \leq n-1$
one also has $v_{ijk}$ in $\im(\overline{\varphi})$.  
However, we claim this will follow from the $\ell=n$ case
of another identity in $\mathcal{U}^n$ whose easy check is left to the reader:
\begin{itemize}
\item[(d)] $v_{ijk}-v_{ij\ell}+v_{ik\ell}-v_{jk\ell}=0$
\end{itemize}
The $\ell=n$ case of (d) shows that $v_{ijk}=v_{ijn}-v_{ikn}+v_{jkn}=\varphi(u_{ij}-u_{ik}+u_{jk})$, 
lying in $\im(\varphi)$.

\subsection{Proof of Theorem~\ref{subalgebra-theorem}(iii)}

Recall part (iii) of the theorem asserts that the
ring isomorphism $\overline{\varphi}$ from $\mathcal{U}^{n-1}$ onto the subalgebra $\mathcal{V}^n$ inside $\mathcal{U}^n$ 
generated by $\{v_{ijk}\}$ is, up to an overall scaling by $c$ in $\QQ^\times$,
the same as the composite $(h \circ \pi \circ i)^*$ of these maps on cohomology derived  
from the maps in \eqref{topological-composite-sequence}:
$$
\xymatrix{
H^*\Pconf^n(\RR^3)  \ar@{=}[d] &  
  & H^*X_n  \ar[ll]_-{(\pi \circ i)^*}   
       &H^*\Pconf^{n-1}(\RR^3) \ar[l]_-{h^*}  \ar@{=}[d] \\
\mathcal{U}^n& &\mathcal{V}^{n}\ar[ll] \ar[u] &\mathcal{U}^{n-1}\ar[l] 
}
$$
where the middle vertical map is a grade-doubling $\symm_n$-equivariant 
algebra isomorphism $H^* X_n \cong \mathcal{V}^{n}$.  Our strategy will be to first check
this when $n=3$, and then deduce the general case using functoriality.

\vskip.2in
\noindent
{\sf The case $n=3$.}
Here we claim that it suffices to check 
$(\pi \circ i)^*: H^2 X_3 \rightarrow H^2 \Pconf^3(\RR^3)$ 
is not the {\it zero} map.  To see this claim, note that
since $(\pi \circ i)^*$ is $\symm_3$-equivariant, and  
$\dim H^2 X_3=\dim(\mathcal{U}^{2})_1=1$,  
nonzero-ness would imply $(\pi \circ i)^*(H^2 X_3)$ 
is a $1$-dimensional $\symm_3$-stable subspace of 
$
H^2 \Pconf^3(\RR^3) =(\mathcal{U}^{3})_1.
$
One can explicitly decompose $(\mathcal{U}^{3})_1$ into $\symm_3$-irreducibles,
using its definition as the quotient of the 
$\QQ$-span of $\{u_{ij}\}_{1 \leq i \neq j \leq 3}$
by the relations $u_{ij}+u_{ij}=0$.  One can check that 
$$
\begin{array}{rcccl}
(\mathcal{U}^{3})_1 &=& \QQ v_{123} &\oplus& Z \\ 
                   &\cong & V^{(1,1,1)} &\oplus& V^{(2,1)}
\end{array}
$$
in which 
\begin{itemize}
\item $\QQ v_{123}$ 
carries the $1$-dimensional $sgn$ representation $V^{(1,1,1)}$,
by relation (b) above, and
\item
$Z$ carries the $2$-dimensional irreducible representation
$V^{(2,1)}$ as the span of the images of
$z_1, z_2, z_3$ where $z_i:=u_{i1}+u_{i2}+u_{i3}$ (with convention $u_{ii}=0$),
satisfying $z_1+z_2+z_3=0$.
\end{itemize}
Therefore the only $1$-dimensional $\symm_3$-stable subspace of
$(\mathcal{U}^{3})_1$ is
$\QQ v_{123}$, and hence nonzero-ness would force 
$(\pi \circ i)^*(H^2 X_3)=\QQ v_{123}$, implying that  
$v_{123}$ lies in $\im(\pi \circ i)^*$.  
Also $h$ is a homeomorphism, so
$h^*$ must map $(\mathcal{U}^{2})_1 = H^2 \Pconf^2(\RR^3)=\QQ u_{12}$ isomorphically
onto $H^2 X_3$, which $(\pi \circ i)^*$ then maps further onto $\QQ v_{123}$.
Hence there would exist a scalar $c_{123}$ in $\QQ^\times$ so that $\overline{\varphi}$
induced from sending $u_{12}$ to $v_{123}$ would agree with $c_{123} \cdot (h \circ \pi \circ i)^*$
in this case.

To show that $(\pi \circ i)^*: H^2(X_3) \rightarrow H^2 \Pconf^3(\RR^3)$ 
is not the zero map, we use the identification
of $\RR^3$ with the nonidentity elements\footnote{This identification means that we sometimes use both the additive group structure from $\RR^3$ to add/subtract vectors from $\RR^3$ in the same formula where we use the multiplicative group structure from $SU_2$ to multiply/invert them, as in \eqref{part-of-degree-one-composite}. 
} $SU_2 \setminus \{1\} \subset SU_2$ to exhibit a degree one 
self-map of the unit 2-sphere $\sphere^2$ inside $\RR^3$ that
factors through $\pi \circ i: \Pconf^3(\RR^3) \rightarrow X_3$.  

One obtains such a map by precomposing this composite
\begin{equation}
\label{part-of-degree-one-composite}
\begin{array}{rcccccl}
\Pconf^3(\RR^3) & \rightarrow  & X_3 &\rightarrow& \Pconf^2(\RR^3) &\rightarrow &\sphere^2 \\
(p_1,p_2,p_3) & \longmapsto & \overline{(p_1,p_2,p_3)} & \longmapsto &(p_3^{-1} p_1, p_3^{-1}p_2) 
   &  \longmapsto &\frac{p_3^{-1} p_1 - p_3^{-1}p_2 }{|p_3^{-1} p_1 - p_3^{-1}p_2 |}
\end{array}
\end{equation}
with the map
$
\sphere^2  \longrightarrow  \Pconf^3(\RR^3)
$
sending
$v$ to  $(v, 0, p )$
where $p \in \RR^3$ is any particular choice of a vector having length $|p| > 1$.  The result is the map $\psi_p$ with this formula:
$$
\begin{array}{rcl}
\sphere^2 & \overset{\psi_p}{\longrightarrow} & \sphere^2 \\
v & \longmapsto & \frac{p^{-1} v - p^{-1}0 }{|p^{-1} v - p^{-1}0|}.
\end{array}
$$
This map $\psi_p$ is homotopic to the identity map on $\sphere^2$, by sending $p$ to $\infty$ in $\RR^3$, which is $1$ in $SU_2$:
one can calculate $\lim_{p \rightarrow \infty} \psi_{p}(v) = \frac{v-0}{|v-0|}=\frac{v}{|v|}=v$.
Therefore $\psi_p$ has degree one as a self-map of $\sphere^2$, as desired.

\vskip.2in
\noindent
{\sf The general case $n \geq 3$.}
We first show that each $v_{ijn}$ for $1 \leq i<j \leq n-1 $ lies in $\im(\pi \circ i)^*$,
by reducing to the case $n=3$ as follows. Consider
the commutative diagram with vertical maps 
induced by sending $(p_1,\ldots,p_n)$ to $(p_i,p_j,p_n)$
$$
\xymatrix{ 
\Pconf^n(\RR^3) \ar[d] \ar[r]^-{i}& \Pconf^n(SU_2) \ar[d] \ar[r]^-{\pi} & X_n\ar[d] \\
\Pconf^3(\RR^3) \ar[r]^-{i}& \Pconf^{3}(SU_2) \ar[r]^-{\pi} & X_3\\
}
$$
This gives rise to horizontal composite maps $(\pi \circ i)^*$ on cohomology:
$$
\xymatrix{ 
\mathcal{U}^n& H^*X_n \ar[l]_{(\pi \circ i)^*}\\
\mathcal{U}^{3}\ar[u]& H^*X_3 \ar[u] \ar[l]_{(\pi \circ i)^*} \\
}
$$
From the $n=3$ case applied with the triple of indices $(i,j,n)$ replacing $(1,2,3)$, the 
bottom horizontal map $(\pi \circ i)^*: H^*X_3 \rightarrow \mathcal{U}^{3}$ has
$v_{ijn}$ in its image.  From the fact that the classes
$u_{ij}, u_{in}, u_{jn}$ in $\mathcal{U}^n$ are pulled back from the
corresponding classes in $\mathcal{U}^3$, and commutativity of the diagram,
it follows that the top horizontal map $(\pi \circ i)^*: H^*X_n \rightarrow \mathcal{U}^n$ 
also has $v_{ijn}$ in its image.

Once we know that all $v_{ijn}$ lie in $\im(\pi \circ i)^*$, as before, applying the $\ell=n$ case of property (d) above for the $v_{ijk}$ shows
that all $v_{ijk}$ lie in $\im(\pi \circ i)^*$, and hence so does the entire subalgebra $\mathcal{V}^n$.
Since we already know from part (ii) of the theorem that
$$
\dim \mathcal{V}^{n} = \dim \mathcal{U}^{n-1} \left( = \dim H^* \Pconf^{n-1} \RR^3  = \dim H^* X_n \right),
$$
this shows via dimension count that $(\pi \circ i)^*$ maps $H^* X_n$
{\it isomorphically} onto the subalgebra $\mathcal{V}^n$.  
Since $h$ is a homeomorphism, this also means that $(h \circ \pi \circ i)^*$ maps
$H^* \Pconf^{n-1} \RR^3$ isomorphically onto $\mathcal{V}^n$.  Furthermore, the calculation
for $n=3$, replacing indices $(1,2,3)$ with $(i,j,n)$, shows that $(h \circ \pi \circ i)^*$ sends $u_{ij}$ to $c_{ijn} \cdot v_{ijn}$
for some scalars $c_{ijn}$ in $\QQ^\times$.  However, note that $h \circ \pi \circ i$ is $\symm_{n-1}$-equivariant,
and hence so is $(h \circ \pi \circ i)^*$, which forces all of the scalars $c_{ijn}$ to equal
a single scalar $c$.  This completes the proof of Theorem~\ref{subalgebra-theorem}(iii).


\section{Remarks}

\subsection{Relation to work of Mathieu and of d'Antonio and Gaiffi on the ``hidden'' action of $\symm_{n+1}$}
\label{Mathieu-remark}
Note that replacing $n-1$ by $n$ in Theorem~\ref{subalgebra-theorem} 
gives an $\symm_n$-isomorphism $\varphi:\mathcal{U}^n \rightarrow \mathcal{V}^{n+1}$,
which reveals why there is a ``hidden'' $\symm_{n+1}$-action on $\mathcal{U}^n$.
It also gives an alternate explanation of 
Whitehouse's result \eqref{Whitehouse-lifting-property}
that the $\symm_{n+1}$ action on $F^{(j)}_n$ restricts to the action
of $\symm_n$ on $E^{(j)}_{n-1}$. 
This hidden action and our results bear a close relation to previous work of 
Mathieu \cite{Mathieu1, Mathieu2}, and of
d'Antonio and Gaiffi \cite{d'AntonioGaiffi}, which we explain here.  

In \cite[\S6]{Mathieu1}, Mathieu denotes by
 $A_n$ the ring that we have abstractly presented as $\mathcal{U}^n$ 
in \eqref{Cohen-presentation}.  He then 
identifies it as a certain ``limit ring'',
which he denotes $Inv_n(\infty)^*$ in \cite[Thm. 7.6]{Mathieu1} and denotes
as $(S\mathcal{U}^n)^*$ in \cite[\S3]{Mathieu2}.  
In \cite[Lem. 3.3]{Mathieu2}, he extends the natural
action of $\symm_n$ permuting subscripts on the generators $x_{ij}$ for $1 \leq i<j \leq n$
in his $(S\mathcal{U}^n)^*$ to a ``hidden'' action of $\symm_{n+1}$. One can check that his
action is consistent with ours\footnote{Further translating notations, his generator $x_{ij}$ in $SC^*_n$
corresponds to what we would call the generator $v_{i,j,n+1}$ in $\mathcal{V}^{n+1}$.}.  
Bearing these identifications in mind, then
the part of his result \cite[Cor. 4.5]{Mathieu2} pertaining
to $(S\mathcal{U}^n)^*$ is equivalent to our Proposition~\ref{graded-characterization},
which he had already noted in his \cite[Cor. 4.6]{Mathieu2}
strengthens the Getzler and Kapranov result \eqref{Getzler-Kapranov-observation}.
On the other hand, in \cite{d'AntonioGaiffi}, 
d'Antonio and Gaiffi give another, more direct
representation-theoretic construction of the hidden
$\symm_{n+1}$-action on $\mathcal{U}^n$, along with several results
on the $\symm_{n+1}$-irreducible decomposition of this action.

The novelty of our results, compared to these previous works, is
in identifying the direct sum of  
Whitehouse's lifts $\oplus_j F^{(j)}_n$
as both the cohomology $H^*X_n$, and
as the subalgebra $\mathcal{V}^n$ of $\mathcal{U}^n=H^*\Pconf^n\RR^3$.

\subsection{Relation to the Moseley, Proudfoot, and Young conjecture}
They conjecture the following.

\begin{conjecture}\label{thm: MPY Conjecture} \cite[Conj. 2.10]{MoseleyProudfootYoung}
One has a grade-doubling 
isomorphism of $\symm_n$-modules (but not $\QQ$-algebras)
$$
M_n:=\QQ[u_{ij}]_{1 \leq i \neq j \leq n} / K \,\, \cong \,\, H^*X_n 
$$
where $\deg(u_{ij})=1$, and $K$ is the ideal generated by 
\begin{itemize}
\item[(a)] $u_{ij}+u_{ji}$ for $1 \leq i < j \leq n$,
\item[(b)] $u_{ij} u_{jk} + u_{jk} u_{ki} + u_{ki} u_{ij}$ for triples $(i,j,k)$ of distinct integers $1 \leq i,j,k \leq n$,
\item[(c)] $z_1,\ldots,z_n$ where $z_i:=\sum_{j=1}^n u_{ij}$ (with convention $u_{ii}:=0$).
\end{itemize}
Here $\symm_n$ permutes subscripts in the $u_{ij}$.
\end{conjecture}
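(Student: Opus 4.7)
The plan is to combine Theorem~\ref{subalgebra-theorem}(iii), which realizes $H^*X_n$ as the subalgebra $\mathcal{V}^n \subset \mathcal{U}^n$, with an explicit grade-preserving $\symm_n$-equivariant isomorphism $M_n \to \mathcal{V}^n$. First I would consider the $\symm_n$-equivariant $\QQ$-algebra map
\[
\tilde\phi: \QQ[u_{ij}]_{1 \leq i \neq j \leq n} \longrightarrow \mathcal{V}^n,
\qquad u_{ij} \longmapsto \frac{1}{n-2}\sum_{k \neq i,j} v_{ijk},
\]
and show it descends to a map $\phi: M_n \to \mathcal{V}^n$. The defining relations $u_{ij}+u_{ji}$ and $z_i$ visibly map to zero using the antisymmetry of $v_{ijk}$ in $(i,j)$ and in $(j,k)$ respectively. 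Bookkeeping with $Z_i := \sum_k u_{ik}$ gives the alternative formula $\tilde\phi(u_{ij}) = \tfrac{n}{n-2}u_{ij} + \tfrac{Z_j - Z_i}{n-2}$ in $\mathcal{U}^n$. The image of an Arnold relation then splits, by $Z$-degree, into a top-order piece (vanishing by Arnold in $\mathcal{U}^n$) and lower-order pieces built out of $u_{ab}Z_c$ and $Z_aZ_b$; these must be shown to collapse using identities (a)--(d) from \S\ref{subalgebra-section}. The $n=3$ case is immediate, since $\tilde\phi(u_{12}) = v_{123}$ and $v_{123}^2 = 0$.

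Surjectivity of $\phi$ would follow from identity (d): rearranging $v_{ijk} = v_{ij\ell} - v_{ik\ell} + v_{jk\ell}$ and averaging over the auxiliary index $\ell$ realizes each generator of $\mathcal{V}^n$ as an explicit $\QQ$-linear combination of the $\tilde\phi(u_{\alpha\beta})$'s. For injectivity I would appeal to a dimension count: since $\dim \mathcal{V}^n = \dim \mathcal{U}^{n-1} = (n-1)!$ by Theorem~\ref{subalgebra-theorem}(ii), it suffices to exhibit a spanning set of $M_n$ of size $(n-1)!$. A natural attack is to fix a term order on the $u_{ij}$ and use the Arnold and $z_i$ relations as a rewriting system, with the hope that the standard monomials are indexed by a combinatorial family (for instance, labelled rooted trees or permutations fixing a designated element $n$) of cardinality $(n-1)!$. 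Grade doubling in the final statement is then automatic, since $\phi$ preserves the polynomial grading while Theorem~\ref{subalgebra-theorem}(iii) matches degree $k$ of $\mathcal{V}^n$ with $H^{2k}X_n$.

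The main obstacle I expect is the combined burden of the Arnold check and the dimension bound on $M_n$. The Arnold calculation is a multilinear identity in $\mathcal{U}^n$ that must be extracted from (a)--(d); either it collapses cleanly for a principled reason (perhaps because $\tilde\phi$ can be reinterpreted as an ``averaging'' projection $\mathcal{U}^n \to \mathcal{V}^n$ built from the hidden $\symm_{n+1}$-action discussed in \S\ref{Mathieu-remark}), or else it fails identically and $\phi$ must be replaced by a module map that is not an algebra map. The upper bound $\dim M_n \leq (n-1)!$ is the representation-theoretic heart of the MPY conjecture; should the Gr\"obner/rewriting approach resist, a fallback is to bypass constructing $\phi$ altogether and instead compare $\symm_n$-graded characters of $M_n$ and $H^*X_n$, the latter already given in closed form via Propositions~\ref{graded-characterization} and~\ref{MPY-proposition} in terms of Whitehouse's representations $F_n^{(j)}$.
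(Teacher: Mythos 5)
This is a conjecture in the paper, not a theorem: the paper states it, reformulates it, and leaves it open, so there is no ``paper's own proof'' to compare against. The central issue with your plan is that the paper explicitly warns against it. In the same remark where the conjecture is stated, the authors note that $M_n$ and $\mathcal{V}^n$ are both generated in degree one by the images of the $v_{ijk}$, but that ``the ideals of relations satisfied by these generators are different in the two rings, and they are \emph{not isomorphic as graded algebras}.'' Your main construction is a $\symm_n$-equivariant $\QQ$-algebra map $\tilde\phi\colon M_n \to \mathcal{V}^n$, which your own surjectivity computation (via identity~(d), giving $v_{ijk}=\tfrac{n-2}{n}\tilde\phi(u_{ij}-u_{ik}+u_{jk})$) would make onto; if it were also well-defined and if $\dim M_n = (n-1)!$, it would be a graded algebra isomorphism, contradicting the paper's remark. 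So for general $n$ either the Arnold check fails (and $\tilde\phi$ is not well-defined on $M_n$), or $\dim M_n > (n-1)!$ (contradicting the conjecture). Assuming the conjecture, it must be the former. The $n=3$ verification you carry out is not representative: for $n=3$ one can compute directly that $M_3 \cong \QQ[\varepsilon]/(\varepsilon^2) \cong \mathcal{V}^3$, so the two happen to coincide, precisely the coincidence that breaks down for larger $n$.

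You do flag the possibility that ``it fails identically and $\phi$ must be replaced by a module map that is not an algebra map,'' and your closing fallback --- comparing $\symm_n$-graded characters of $M_n$ against those of $H^*X_n = \bigoplus_j \sgn\otimes F_n^{(n-j)}$ --- is exactly the reformulation the paper itself identifies (``this conjecture is equivalent to the assertion that the above algebra $M_n$ carries the $\symm_n$-representation $\sgn\otimes F_n^{(n-j)}$ in its $j^{\mathrm{th}}$ graded component''). But that is where the real difficulty lies, and neither the paper nor your proposal carries it out. In short: the algebra-map strategy is ruled out by the paper's own observation, and the correct strategy is only gestured at, so the proposal does not constitute a proof.
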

Proposition~\ref{MPY-proposition} shows that
this conjecture is equivalent to the assertion that the above
algebra $M_n$ carries the $\symm_n$-representation
$\sgn \otimes F^{(n-j)}_n$ in its $j^{th}$ graded component.
Although both our algebra $\mathcal{V}^n$ and this algebra $M_n$ are generated
in degree one by the images of the elements $v_{ijk}:= u_{ij}+u_{jk}+u_{ki}$,
one can check that the ideals of relations satisfied by these generators 
are different in the two rings, and they are {\it not isomorphic as graded algebras}.

\begin{remark}
We repeat here the motivation given in \cite[\S1]{MoseleyProudfootYoung}
for Conjecture~\ref{thm: MPY Conjecture}.  Both the ring
$\mathcal{U}^n$ defined in Section~\ref{intro-section} 
and the ring $M_n$ considered in  Conjecture~\ref{thm: MPY Conjecture}
are quotients of the {\it Orlik-Terao algebra} $OT_n$, defined to
be the quotient of $\QQ[u_{ij}]_{1 \leq i \neq j \leq n}$ by the relations (a),(b) above (without (c));
alternatively $OT_n$ is the quotient of $\QQ[u_{ij}]_{1 \leq i \neq j \leq n}$ by
all of the relations in $\mathcal{U}^n$ except for not setting $u_{ij}^2=0$.
This ring $OT_n$ has a topological interpretation as the torus-equivariant intersection homology of
a certain hypertoric variety \cite[Thm. 3.1]{MoseleyProudfootYoung}.
Conjecture~\ref{thm: MPY Conjecture} would completely describe the $\symm_n$-representation 
on $OT_n$ in the following way.  It is known that, as a graded $\symm_n$-representation,
$OT_n$ is isomorphic to a graded tensor product 
\begin{equation}
\label{graded-tensor-product}
OT_n \cong M_n \otimes R_n := 
\bigoplus_{d=0}^\infty \,\, \bigoplus_{i+j=d} (M_n)_i \otimes (R_n)_j
\end{equation}
of the quotient algebra $M_n$ with the subalgebra 
$R_n=\QQ[z_1,\ldots,z_n]$ generated by the elements in (c).
Since this subalgebra $R_n$ is isomorphic to the symmetric algebra $\Sym(V^{(n-1,1)})$
of the irreducible reflection representation $V^{(n-1,1)}$ for $\symm_n$, 
the description of the $\symm_n$-representation on $M_n$ coming from
Conjecture~\ref{thm: MPY Conjecture}, along with 
\eqref{graded-tensor-product}, would complete the description for $OT_n$.
\end{remark}

\subsection{Presentation and bases for $\mathcal{V}^n$}
Since Theorem~\ref{subalgebra-theorem} gives a ring isomorphism 
$g: \mathcal{U}^{n-1} \rightarrow \mathcal{V}^{n}$ sending $u_{ij}$ to $v_{ijn}$, 
one can use this to give a minimal presentation for $\mathcal{V}^n$, 
coming from the one in \eqref{Cohen-presentation} for $\mathcal{U}^{n-1}$.

Also, there are known monomial bases in the $u_{ij}$ for $\mathcal{U}^{n}$, such as the 
{\it nbc-basis} \cite[Cor. 5.3]{OrlikTerao}, which can then be
mapped forward to exhibit convenient monomial bases in the $v_{i,j,n+1}$ for $\mathcal{V}^{n+1}$.
We recall here the convenient description of the nbc-basis in this case (see Barcelo and Goupil \cite[\S2]{BarceloGoupil}):
the nbc-monomials are the $n!$ monomials obtained as products of one element from each
of the following $n$ sets:
\begin{equation}
\label{type-A-nbc-monomials}
\{1\},\,\,
\{1,u_{12}\}, \,\,
\{1,u_{13},u_{23}\}, \,\,
\{1,u_{14},u_{24},u_{34}\}, \,\, 
\ldots,\,\,
\{1,u_{1n},u_{2n},\ldots,u_{n-1,n}\}.
\end{equation}

On the other hand, 
since the dimension of the $j^{th}$ graded component 
$(\mathcal{U}^{n})_j$ or $(\mathcal{V}^{n+1})_j$ is the number of permutations 
$w$ in $\symm_{n}$ with $n-j$ cycles, one might expect
basis elements indexed by such $w$.  
One has such a basis, motivated by the work in 
\cite{EarlyCanonicalPlateBasis,EarlyBlades}, 
of the following form.
For a permutation $w$ in $\symm_{n}$, write each cycle $C$ of $w$ uniquely as 
$C=(c_1 c_2  \cdots c_\ell)$ with convention $c_1=\min\{ c_1, c_2,\ldots, c_\ell\}$.  Use this
to define elements
$$
\begin{aligned}
{\bf u}(w)&:=\prod_{\text{cycles }C\text{ of }w} 
             u_{ c_1, c_2} \,\, u_{c_2, c_3} \cdots u_{c_{\ell-1} c_\ell},\\
{\bf v}(w)&:=\prod_{\text{cycles }C\text{ of }w} 
             v_{ c_1, c_2, n+1} \,\, v_{c_2, c_3,n+1} \cdots v_{c_{\ell-1} c_\ell,n+1}.
\end{aligned}
$$

\begin{proposition}
One has bases for $(\mathcal{U}^{n})_j$ and $(\mathcal{V}^{n+1})_j$ 
given by $\{ {\bf u}(w) \}$ and $\{ {\bf v}(w) \}$ 
as $w$ runs through all permutations in $\symm_{n}$ with $n-j$ cycles.  
\end{proposition}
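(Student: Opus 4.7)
The plan is to first reduce both claims to one via the ring isomorphism of Theorem~\ref{subalgebra-theorem} applied with $n+1$ in place of $n$: it sends $u_{ij}$ to $v_{ij,n+1}$, hence ${\bf u}(w)$ to ${\bf v}(w)$.  So it suffices to show that $\{{\bf u}(w) : w \in \symm_n \text{ has } n-j \text{ cycles}\}$ is a basis of $(\mathcal{U}^n)_j$.  A routine degree check (each cycle of length $\ell$ contributes $\ell-1$ linear factors) gives $\deg{\bf u}(w) = j$ when $w$ has $n-j$ cycles.  The number of such $w$ is the signless Stirling number $c(n, n-j) = \dim E_n^{(n-j)}$, which by \eqref{Eulerian-topological-interpretation} combined with the grade-doubled isomorphism \eqref{Cohen-isomorphism} equals $\dim(\mathcal{U}^n)_j$.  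Thus by dimension count, it suffices to establish linear independence of $\{{\bf u}(w)\}$ in $(\mathcal{U}^n)_j$.

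For the independence, my plan is to exploit the Bj\"orner-type decomposition of the Orlik--Solomon algebra $\mathcal{U}^n$ by the braid intersection lattice: as graded $\QQ$-vector spaces, $\mathcal{U}^n = \bigoplus_\Pi \mathcal{U}^n_\Pi$, where $\Pi$ runs over set partitions of $[n]$ and the summand $\mathcal{U}^n_\Pi$ is concentrated in degree $n-|\Pi|$ with $\dim \mathcal{U}^n_\Pi = \prod_{S \in \Pi}(|S|-1)!$ and a K\"unneth-type factorization across the blocks of $\Pi$.  Observing that if $w$ has cycle support $\Pi$ then ${\bf u}(w)$ lies in $\mathcal{U}^n_\Pi$ (since its cycle factorization matches the partition factorization), the global linear independence reduces via the direct sum to linear independence within each $\mathcal{U}^n_\Pi$, and via the tensor factorization further to the \emph{single-cycle case}.

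The main obstacle is therefore showing, for each $\ell \geq 1$, that the $(\ell-1)!$ cycle monomials ${\bf u}(\sigma) = u_{1\,c_2}\,u_{c_2\,c_3}\cdots u_{c_{\ell-1}\,c_\ell}$, as $\sigma=(1\,c_2\,\cdots\,c_\ell)$ ranges over single cycles of $\symm_\ell$ starting at $1$, form a basis of the top-degree component $(\mathcal{U}^\ell)_{\ell-1}$.  I would set up a natural bijection $\Phi$ from such cycles to the nbc-basis of $(\mathcal{U}^\ell)_{\ell-1}$, which is itself indexed by increasing rooted trees on $[\ell]$, via $\sigma \mapsto T$ where the parent of $c_i$ in $T$ is $c_j$ with $j = \max\{m < i : c_m < c_i\}$.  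Expanding ${\bf u}(\sigma)$ in the nbc-basis via the Arnold relation $u_{ij}u_{jk} = u_{ik}(u_{ij}+u_{jk})$ for $i<j<k$, iterated along the descents of the cycle sequence, should yield the nbc-monomial of $\Phi(\sigma)$ with coefficient $\pm 1$; the delicate part is choosing a statistic on increasing trees for which the resulting $(\ell-1)!\times (\ell-1)!$ change-of-basis matrix becomes triangular, or else, failing a clean triangularity, showing directly that it is invertible via a recursion on $\ell$ or a determinant identity.  The cases $\ell \leq 4$ can be checked by hand (and confirm non-singularity), but making this bookkeeping work uniformly in $\ell$---and then lifting the resulting single-cycle basis cleanly through the Orlik--Solomon direct sum to the multi-cycle setting---is where the substantive combinatorial work lies.
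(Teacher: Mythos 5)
Your reduction to $\mathcal{U}^n$ via the ring isomorphism and your dimension count are both correct and agree with the paper; the sticking point is that the rest of your argument is a plan, not a proof. You yourself flag the hole: you have not actually shown that the $(\ell-1)!$ cyclic monomials ${\bf u}(\sigma)$ are linearly independent in $(\mathcal{U}^\ell)_{\ell-1}$, only that a triangularity argument against the nbc-basis ``should'' work, and you explicitly leave open whether the change-of-basis matrix is triangular or how to see its invertibility ``uniformly in $\ell$.'' That is the entire content of the proposition --- everything else is bookkeeping --- so as it stands there is a genuine gap.

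The paper closes the gap by a different and noticeably more economical route. Instead of proving independence (which pushes you toward a flats decomposition and a matrix analysis), it proves that $\{{\bf u}(w)\}_{w\in\symm_n}$ \emph{spans} $\mathcal{U}^n$, which by your dimension count is equivalent. Spanning is established by induction on $n$: using the nbc-description \eqref{type-A-nbc-monomials}, every element of $\mathcal{U}^n$ is a $\QQ$-linear combination of products $x\cdot u_{in}$ with $x\in\mathcal{U}^{n-1}$, and by the inductive hypothesis $x$ is in the span of $\{{\bf u}(w')\}_{w'\in\symm_{n-1}}$. One then shows each ${\bf u}(w')\cdot u_{in}$ lies in the span of $\{{\bf u}(w)\}_{w\in\symm_n}$, by a second, downward induction on the position of $i$ within its cycle, using the single Arnold relation $u_{c_{k+1},i}\,u_{i,n}+u_{i,n}\,u_{n,c_{k+1}}+u_{n,c_{k+1}}\,u_{c_{k+1},i}=0$ to shuffle $n$ further along the cycle. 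This avoids the Orlik--Solomon-type direct sum over set partitions, the increasing-tree bijection, and any determinant or triangularity claim. If you do want to pursue your route, you would need to (a) justify the flats decomposition for the commutative ring $\mathcal{U}^n$ (which is the Artinian Orlik--Terao algebra, not the Orlik--Solomon algebra, though the analogous decomposition does hold), and (b) carry out the single-cycle triangularity argument in full; but note that the paper's spanning recursion already does step (b) implicitly and more simply, and renders (a) unnecessary.
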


\begin{proof}
First, note that for a permutation $w$ with $n-j$ cycles, both
${\bf u}(w)$ and ${\bf v}(w)$ are homogeneous of degree $j$.  Also, 
since $g: \mathcal{U}^{n} \rightarrow \mathcal{V}^{n+1}$
has ${\bf v}(w)=g({\bf u}(w))$, it suffices to show that
$\{ {\bf u}(w) \}_{w \in \symm_{n}}$ is a basis 
for $\mathcal{U}^{n}$.   Since $\dim \mathcal{U}^{n}=n!$, 
by dimension-counting it suffices to check that 
$\{ {\bf u}(w) \}_{w \in \symm_n}$ spans $\mathcal{U}^n$.

We prove this via induction on $n$. The base case $n=1$
is easy: the identity permutation $e$ in $\symm_1$ has
$u(e)=1$ spanning $\mathcal{U}^1$.  In the inductive step,
note that the description \eqref{type-A-nbc-monomials} of the nbc-monomials shows that 
$\mathcal{U}^n$ is spanned by these products where $x$ runs through the elements of $\mathcal{U}^{n-1}$:
$$
\{x,\,\, x \cdot u_{1n}, \,\, x \cdot u_{2n}, \,\, x \cdot u_{3n}, \,\, \ldots, \,\, x\cdot u_{n-1,n} \}.
$$
The inductive hypothesis implies that all such $x$ lie in the span of 
$\{ {\bf u}(w') \}_{w' \in \symm_{n-1}}$.  Thus it suffices to show that every element of the form
${\bf u}(w') \cdot u_{in}$ with $w'$ in $\symm_{n-1}$ and $1 \leq i \leq n-1$ lies in the 
span of $\{ {\bf u}(w) \}_{w \in \symm_{n}}$.  

Let $C$ be the unique cycle of $w'$ that contains the value $i$. 
If $i$ is alone in this cycle $C$, that is $w'(i)=i$, then
$w=w' \cdot (i,n)$ has ${\bf u}(w') \cdot u_{in}= {\bf u}(w)$, and we are done.
If not, say $i$ lies in a non-singleton cycle 
$C=(c_1 c_2  \cdots c_\ell)$, and let $i=c_k$ with 
$1 \leq k \leq \ell$.
Here we show ${\bf u}(w') \cdot u_{in}$ lies in the span of $\{ {\bf u}(w) \}_{w \in \symm_{n}}$
via downward induction on $k$.  In the base case, where $k=\ell$ so that $C=(c_1 c_2  \cdots c_{\ell-1} i)$,
the element $w$ in $\symm_n$ obtained from $w'$ by
replacing $C$ with $C'=(c_1 c_2  \cdots c_{\ell-1} i n)$ satisfies 
${\bf u}(w') \cdot u_{in}= {\bf u}(w)$, so we are done. 
In the inductive step where $k < \ell$, one can use the relation 
$$
u_{c_{k+1},i} \, u_{i,n}  +  u_{i,n} \, u_{n,c_{k+1}} + u_{n,c_{k+1}} \, u_{c_{k+1},i} =0
$$
to rewrite
$${\bf u}(w') \cdot u_{i,n}={\bf u}(w) + {\bf u}(w') \cdot u_{c_{k+1},n},$$
or equivalently
\begin{eqnarray*}
	\mathbf{u}(w) & = & \mathbf{u}(w')\cdot (u_{i,n}-u_{c_{k+1},n}),
\end{eqnarray*}
where $w$ is obtained from $w'$ by replacing the cycle $C$ with
$C'=(c_1 c_2 \cdots c_{k-1} i n c_{k+1} \cdots c_\ell)$, and downward induction applies to ${\bf u}(w') \cdot u_{c_{k+1},n}$,
showing that it lies in the span of $\{ {\bf u}(w) \}_{w \in \symm_{n}}$, completing the proof.
\end{proof}

\section*{Acknowledgements}
The first author is grateful to his doctoral advisor, Adrian Ocneanu, for many interesting discussions related to his work on plates and blades.  He also thanks William Norledge for interesting discussions.  The second author gratefully acknowledges the hospitality of the American Institute of Mathematics workshop on Representation Stability in June 2016.  In particular, he thanks the members of a working group there that included Kevin Casto, Weiyan Chen, Patricia Hersh, Claudiu Raicu, Rita Jimenez-Rolland,  Simon Rubinstein-Salzedo,  Amin Saied, and David Speyer, for their helpful comments, suggestions, and references.  Both authors thank Fran\c{c}ois Bergeron, Sarah Brauner, Darij Grinberg, Nick Proudfoot, and Sheila Sundaram for helpful comments, and they thank an anonymous referee for suggestions that improved the exposition.


\end{document}